\documentclass[11pt,letterpaper]{amsart}

\usepackage[margin=1in]{geometry}
\usepackage{amsmath,amssymb,amsthm}
\usepackage{hyperref}
\usepackage{orcidlink}

\newtheorem{theorem}{Theorem}
\newtheorem{lemma}[theorem]{Lemma}
\newtheorem{corollary}[theorem]{Corollary}
\theoremstyle{remark}
\newtheorem{remark}[theorem]{Remark}

\DeclareMathOperator{\parity}{par}
\DeclareMathOperator{\Bal}{Bal}
\DeclareMathOperator{\diag}{diag}
\DeclareMathOperator{\adj}{adj}
\numberwithin{equation}{section}

\title{Transpose Symmetry of Injectivity over\\Commutative Semirings}

\author{Sixuan Gu}
\address{Institute of Mathematical Sciences, The Chinese University of Hong Kong, Hong Kong, China}
\email{sixuangu@link.cuhk.edu.cn}

\author{Wei Qi\,\orcidlink{0009-0004-5794-4907}}
\address{Department of Physics, The Ohio State University, Columbus, OH 43210, USA}
\email{qi.673@osu.edu}

\author{Yaoyu Cheng}
\address{Faculty of Business Administration, The Chinese University of Hong Kong, Hong Kong, China}
\email{yaoyucheng@link.cuhk.edu.cn}

\date{}

\begin{document}

\maketitle

\begin{abstract}
Let $R$ be a commutative semiring, not necessarily with a multiplicative identity, and let $A\in M_n(R)$. We prove that the map $x\mapsto Ax$ on $R^n$ is injective if and only if $x\mapsto A^T x$ is injective. Equivalently, the left- and right-cancellative elements of the multiplicative semigroup $M_n(R)$ coincide. The proof splits formal determinant expansions into their even and odd halves; it uses no subtraction, additive cancellation, group completion, inverse, or multiplicative identity. As consequences we recover the stable-finiteness theorem for matrices over unital commutative semirings. We also prove that surjectivity is invariant under transpose. In fact, the existence of a surjective square matrix of positive size forces $R$ to have a multiplicative identity.
\end{abstract}

\noindent\textbf{Keywords.} Commutative semiring; matrix semigroup; injective linear map; transpose; cancellative element; stable finiteness.

\noindent\textbf{2020 Mathematics Subject Classification.} Primary 16Y60; Secondary 15A15, 20M20.

\section{Introduction}

Throughout, a \emph{commutative semiring} means a commutative monoid $(R,+,0)$ together with an associative and commutative multiplication that distributes over addition and for which $0$ is absorbing. A multiplicative identity is not assumed. A matrix acts by the usual finite sums of products. Coordinate relabelings are understood simply as reindexings of tuples; permutation matrices are never used.

Over a field, and more generally over a commutative ring, injectivity of a square matrix and of its transpose are governed by the same determinant. The same conclusion follows for a unital additively cancellative commutative semiring by passage to its Grothendieck ring. Neither argument survives for a general semiring: equality $Ax=Ay$ cannot be converted into a nonzero vector in a kernel, and the two parity halves of a determinant expansion cannot be subtracted. In particular, injectivity is stronger than having a trivial zero-kernel. Over the Boolean semiring, for example,
\[
\begin{pmatrix}
1&1\\
1&1
\end{pmatrix}
\begin{pmatrix}
1\\
0
\end{pmatrix}
=
\begin{pmatrix}
1&1\\
1&1
\end{pmatrix}
\begin{pmatrix}
0\\
1
\end{pmatrix},
\]
although the zero-kernel of the matrix is trivial. Thus partial results about left and right zero divisors under additional entry hypotheses, such as those in \cite{Kanan2016}, do not in general control injectivity as reflection of equality; the cited paper also records a failure of the converse zero-divisor implication.

There are several related left--right symmetry results. Reutenauer and Straubing proved that every unital commutative semiring is stably finite: each matrix semiring $M_n(R)$ is directly finite, so $AB=I_n$ implies $BA=I_n$ \cite{ReutenauerStraubing1984}; see \cite{Shitov2025} for a short proof based on positive and negative determinant expansions. Poplin and Hartwig developed a systematic calculus of determinantal identities over commutative semirings \cite{PoplinHartwig2004}. Row--column duality and matrix semigroups over semirings have also been studied in \cite{WildingJohnsonKambites2013} and \cite{GouldJohnsonNaz2020}; for earlier work on matrices over semirings see \cite{Ghosh1996}. Transposition is, of course, an involutory anti-isomorphism of $M_n(R)$, but by itself it only exchanges a property of $A$ with the opposite-sided property of $A^T$; it does not compare the two sides for the same matrix.

The question whether injectivity itself is transpose-symmetric was posed explicitly in \cite{Xu2026}. That question records elementary positive answers in dimension two and for rings, additively cancellative semirings, and finite unital semirings, but states that there is no existing result on the $3\times3$ case. To the best of our knowledge, the following theorem gives the first and positive answer in all finite dimensions for semirings without additive cancellation or a multiplicative identity.

A \texttt{Lean 4} formalization for this paper, using \texttt{mathlib} \cite{lean4,mathlib}, is available at \url{https://github.com/wqirocks/transpose-injectivity}.

\begin{theorem}
Let $R$ be a commutative semiring and let $A\in M_n(R)$, where $n\geq1$. Then
\[
A:R^n\longrightarrow R^n\quad\text{is injective}
\quad\Longleftrightarrow\quad
A^T:R^n\longrightarrow R^n\quad\text{is injective}.
\]
\end{theorem}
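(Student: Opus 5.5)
The plan is to route both implications through a scalar-type condition that is visibly invariant under transposition, built from the two parity halves of the determinant and the adjugate. For $A\in M_n(R)$ let $d^+(A)$ and $d^-(A)$ be the sums of the products $\prod_i a_{i\sigma(i)}$ over even and over odd permutations $\sigma$, respectively. Define the even and odd adjugates $\adj^+(A)$ and $\adj^-(A)$ entrywise in the same way, with the cofactor sign absorbed into the parity, so that formally $\adj(A)=\adj^+(A)-\adj^-(A)$. Since a permutation and its inverse have the same parity, $d^\pm(A^T)=d^\pm(A)$ and $\adj^\pm(A^T)=\adj^\pm(A)^T$. Everything is written out by reindexing, with no permutation matrices.

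\textbf{Step 1: subtraction-free adjugate identities.} I will prove
\[
A\,\adj^+(A)+d^-(A)I+N=A\,\adj^-(A)+d^+(A)I+N,
\]
and its mirror with $\adj^\pm(A)A$. Here $I$ only abbreviates the scalar action $x\mapsto d\,x$, so no identity element of $R$ is needed. The error term $N$ collects the off-diagonal expansions together with the products involving a repeated column. I will verify these identities monomial by monomial. The point is to exhibit an explicit parity-reversing bijection between terms: transposing two equal columns (respectively two indices) swaps terms between the two sides. The diagonal terms then reproduce $d^\pm$ exactly. If possible I will arrange that $N$ cancels outright on the two sides as a multiset of monomials, giving a genuine identity $A\adj^+ + d^- I = A\adj^- + d^+ I$.

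\textbf{Step 2: a transpose-invariant property.} Suppose $A^Tx=A^Ty$. Apply $\adj^\pm(A)^T=\adj^\pm(A^T)$ and use the Step 1 identity for $A^T$. Writing $u=\adj^+(A^T)A^Tx$ and $v=\adj^-(A^T)A^Tx$, which are the same whether computed from $x$ or $y$, this gives
\[
d^+x+d^-y+(u+v)=d^-x+d^+y+(u+v).
\]
The danger is that the common summand $u+v$ cannot be cancelled. So the invariant property I aim for is:
\[
P(A):\quad d^+x+d^-y+w=d^-x+d^+y+w,\ \text{where } w \text{ ranges over vectors of the form }\adj^\pm(A^T)A^Tz\ (\text{and the analogue for }A),\ \text{implies } x=y.
\]
If the common summand can be absorbed into the Step 1 identity itself, $P(A)$ becomes the clean condition without $w$. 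Its invariance under $A\mapsto A^T$ is then immediate from $d^\pm(A^T)=d^\pm(A)$. This settles "$P(A)\Rightarrow A^T$ injective", and by symmetry also "$P(A)\Rightarrow A$ injective".

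\textbf{Step 3: injectivity implies the property, the expected obstacle.} It remains to show that if $A$ is injective then $P(A)$ holds. Given $d^+x+d^-y=d^-x+d^+y$ (plus the common term), I apply $A$ to both sides and rewrite $d^\pm Ax$ using Step 1 in the form $d^\pm A=A\adj^\mp A+\cdots$. The aim is to produce two vectors $p,q$ with $Ap=Aq$, built from $x,y$ and adjugate images, such that $p=q$ forces $x=y$ once additive terms are regrouped. Injectivity of $A$ then gives $p=q$.

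The hardest part is choosing $p$ and $q$ so that no cancellation is ever required. Additive equalities can only be pushed forward through $A$; they can never be undone. My first attempt will be to take
\[
p=x+\adj^+(A)Ay+\adj^-(A)Ax,\qquad q=y+\adj^+(A)Ax+\adj^-(A)Ay,
\]
and check that $Ap=Aq$ follows from the hypothesis together with Step 1. If $p=q$ does not directly yield $x=y$, I will iterate: apply the adjugates again, and use injectivity of $A$ on the padded vectors. This iteration should strip off the adjugate padding layer by layer.
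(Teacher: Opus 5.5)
There is a genuine gap, and it lies in the choice of invariant, not only in Step 3.

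Your Step 1 is fine and already holds with $N=0$: the diagonal entries of $A\adj^{\pm}(A)$ are exactly $d^{\pm}(A)$, and the off-diagonal entries agree by the equal-rows bijection. The padding $w=u+v$ in Step 2, however, comes from adding two instances of that identity, and it cannot be removed. Work over the Boolean semiring $\{0,1\}$ with $1+1=1$:
\begin{itemize}
\item The matrix $A=\begin{pmatrix}1&1\\0&1\end{pmatrix}$ has $d^+(A)=1$ and $d^-(A)=0$. So the clean condition $d^+r+d^-s=d^-r+d^+s\Rightarrow r=s$ holds, yet $A(0,1)^T=A(1,1)^T$. No condition built from $d^{\pm}$ alone detects injectivity.
\item If you keep the padding, $P(A)$ becomes too strong. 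For $A=I_2$ we have $\adj^+(I_2)=I_2$ and $\adj^-(I_2)=0$, so $w$ ranges over all of $R^2$; even if you tie $w$ to $x$, you get $w=x$. Then $x=(1,1)^T$, $y=0$, $w=x$ satisfy $d^+x+d^-y+w=d^-x+d^+y+w$ with $x\neq y$.
\end{itemize}
So $P(I_2)$ fails although $I_2$ is injective. Step 3 is therefore asking you to prove a false statement, unless $P$ is restricted to $w$ arising from an actual equality $A^Tx=A^Ty$, which is circular.

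Your concrete candidate for Step 3 is also circular, even over a commutative ring. Formally, $p-q=(x-y)-\adj(A)A(x-y)=(x-y)-\det(A)(x-y)$. Under the hypothesis $\det(A)(x-y)=0$, the equation $Ap=Aq$ is just $Ax=Ay$ again.

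The underlying missing idea is that the full determinant and adjugate carry too little information. Already over rings, the implication ``$A$ injective $\Rightarrow \det A$ is a non-zero-divisor'' (McCoy) requires descending through minors of every size. The paper avoids scalar invariants altogether and proves ``$A^T$ injective $\Rightarrow A$ injective'' directly:
\begin{itemize}
\item Given $Ax=Ay$ and a fixed column $q$, it builds matrices $B$ from parity halves of $(n-1)$-minors, with the parity of the $q$-column shifted. These satisfy $A^TB=B^TA$ as an identity of sums of monomials.
\item Then $A^TBx=B^TAx=B^TAy=A^TBy$, and injectivity of $A^T$ gives $Bx=By$.
\item It descends in minor size, using injectivity of $A^T$ again at each level, until it reaches $a_{pq}x_q=a_{pq}y_q$.
\item A second descent through balanced minor relations turns $a_{iq}x_q=a_{iq}y_q$ for all $i$ into $x_q=y_q$.
\end{itemize}
Applying injectivity of $A^T$ once per minor size is what replaces the cancellation your plan needs.
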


There is an equivalent semigroup formulation. An element $A\in M_n(R)$ is \emph{left cancellative} if $AB=AC$ implies $B=C$, and \emph{right cancellative} if $BA=CA$ implies $B=C$. Acting on columns shows that $A$ is left cancellative exactly when $x\mapsto Ax$ is injective. Transposition shows that $A$ is right cancellative exactly when $x\mapsto A^T x$ is injective. Thus Theorem 1 says precisely that the left- and right-cancellative elements of $M_n(R)$ coincide. This may be viewed as the cancellation analogue of the stable-finiteness theorem of Reutenauer--Straubing.

The proof uses two separation arguments. First, parity-minor symmetrizers separate each summand $a_{pq}x_q$ in an equality $Ax=Ay$. Second, balanced minor relations descend in size and turn equality after multiplication by one column into equality of the original scalars. The parity calculus is set up in Section 2, the two separation arguments occupy Sections 3 and 4, and Section 5 gives the proof and consequences concerning one-sided inverses and surjectivity.

\section{Parity notation}

We first introduce notation that replaces signed determinants. If $M=(m_{ij})$ is an ordered $r\times r$ matrix and $e\in\mathbb{Z}/2\mathbb{Z}$, put
\[
\Delta_e(M)=\sum_{\substack{\pi\in S_r\\\parity(\pi)=e}}\prod_{i=1}^r m_{i,\pi(i)}.
\]
Thus $\Delta_0(M)$ and $\Delta_1(M)$ are respectively the even and odd halves of the formal determinant expansion. All parity subscripts are read in $\mathbb{Z}/2\mathbb{Z}$.

For ordered row and column tuples $I,J$ of the same length, $A_{I,J}$ denotes the corresponding ordered submatrix. Deleting an entry from an ordered tuple preserves the order of the remaining entries. Unless distinctness is explicitly imposed, an ordered tuple may contain repeated indices; this is essential for the repeated-column identities below.

We shall use the following three polynomial identities.

\begin{lemma}[Parity identities]
Let $M$ be an ordered square matrix of positive size.
\begin{enumerate}
\renewcommand{\labelenumi}{(\roman{enumi})}
\item If $M'$ is obtained from $M$ by interchanging two columns, then
\[
\Delta_e(M')=\Delta_{e+1}(M).
\]
\item If $M$ has two equal columns, then
\[
\Delta_0(M)=\Delta_1(M).
\]
\item If $r\geq2$, paritywise Laplace expansion in the last column of an $r\times r$ matrix $M$ gives
\[
\Delta_e(M)=\sum_{t=1}^r m_{tr}\Delta_{e+t+r}(M_{\hat t,\hat r}).
\]
\end{enumerate}
\end{lemma}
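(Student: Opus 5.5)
We prove all three identities by bijections between index sets of permutations. These are identities between finite sums of products in a commutative semiring, and each is obtained by reindexing a sum. No subtraction is needed, because we never cancel terms; we only match them one-to-one.

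For (i), suppose $M'$ is obtained from $M$ by interchanging columns $a$ and $b$, and let $\tau=(a\,b)\in S_r$. Then $m'_{i,j}=m_{i,\tau(j)}$, so
\[
\prod_i m'_{i,\pi(i)}=\prod_i m_{i,\tau\pi(i)}.
\]
The map $\pi\mapsto\tau\pi$ is a bijection of $S_r$ with $\parity(\tau\pi)=\parity(\pi)+1$. It therefore carries the permutations of parity $e$ onto those of parity $e+1$ while matching the products term by term. This gives $\Delta_e(M')=\Delta_{e+1}(M)$.

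For (ii), suppose columns $a\neq b$ of $M$ are equal, and let $\tau=(a\,b)$. Then $m_{i,\tau(j)}=m_{i,j}$ for all $i,j$, so $\pi$ and $\tau\pi$ contribute the same product. Since $\pi\mapsto\tau\pi$ is a parity-reversing bijection, the terms of $\Delta_0(M)$ and $\Delta_1(M)$ match one-to-one, and hence $\Delta_0(M)=\Delta_1(M)$. Equivalently, $M'=M$ in (i), so $\Delta_e(M)=\Delta_{e+1}(M)$.

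For (iii), the main bookkeeping step is the parity computation. We partition $S_r$ according to $t=\pi^{-1}(r)$, so that every term of $\Delta_e(M)$ has the form $m_{t r}\prod_{i\neq t}m_{i,\pi(i)}$. The restriction of $\pi$ to rows $\{1,\dots,r\}\setminus\{t\}$ and columns $\{1,\dots,r-1\}$ is a bijection. After order-preserving relabeling of the rows and columns, it becomes a permutation $\sigma\in S_{r-1}$ indexing a term of $\Delta(M_{\hat t,\hat r})$. This correspondence $\pi\leftrightarrow(t,\sigma)$ is a bijection from $S_r$ onto $\{1,\dots,r\}\times S_{r-1}$.

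To relate the parities, we count inversions. Removing the pair $(t,r)$ from the graph of $\pi$ destroys exactly the inversions involving position $t$. These are the rows $i>t$, since each such row has $\pi(i)<r=\pi(t)$, and there are $r-t$ of them. Hence
\[
\parity(\pi)=\parity(\sigma)+(r-t).
\]
Since $-t\equiv t \pmod 2$, the term $\pi$ has parity $e$ exactly when $\sigma$ has parity $e+t+r$. Summing over $t$ then gives the stated formula
\[
\Delta_e(M)=\sum_{t=1}^r m_{tr}\Delta_{e+t+r}(M_{\hat t,\hat r}).
\]
The hypothesis $r\geq2$ ensures that the minors $M_{\hat t,\hat r}$ have positive size, so $\Delta$ is defined on them as in the paper's convention.

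Throughout, repeated indices in ordered tuples cause no difficulty. All three arguments concern the positions of rows and columns, not their labels, so they hold for arbitrary ordered submatrices.
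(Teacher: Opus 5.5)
Your proof is correct and follows the paper's argument. Parts (i) and (ii) use the parity-reversing bijection $\pi\mapsto\tau\pi$, and part (iii) partitions by $t=\pi^{-1}(r)$ with parity shift $t+r$; you additionally justify that shift by counting the $r-t$ inversions involving position $t$, which the paper only asserts.
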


\begin{proof}
Part (i) follows by composing permutations with the transposition of the two columns. If the two columns are equal, this gives a product-preserving bijection between the even and odd summands, proving (ii). For (iii), a permutation using the entry in row $t$ of the last column has parity equal to the parity of its complementary permutation plus $t+r$.
\end{proof}

No subtraction in $R$ was used in the process. Parity is only a label on two finite sums of monomials.

\section{Separation of individual matrix summands}

We now assume $n\geq2$. The difficult step is to separate every summand in a matrix equality without cancellation.

\begin{lemma}[Summand separation]
Assume that $A^T$ is injective. If $Ax=Ay$, then
\[
a_{pq}x_q=a_{pq}y_q\qquad(1\leq p,q\leq n).
\]
\end{lemma}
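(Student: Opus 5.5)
The plan is to reach the conclusion by applying injectivity of $A^T$ to two suitably chosen vectors $U,V\in R^n$. We will arrange that $A^TU=A^TV$ follows from $Ax=Ay$ together with Lemma (Parity identities), and that reading off the $p$-th coordinate of $U=V$ gives exactly $a_{pq}x_q=a_{pq}y_q$. No cancellation is available, so both the equality $A^TU=A^TV$ and the final coordinate equality must come out as literal identities of finite sums of monomials. Nothing may be subtracted, and common terms cannot be dropped.

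\emph{Step 1: parity cofactor vectors.} For a fixed column $q$ and parity $e$, set $u^{(e)}_t=\Delta_{e+t+q}(A_{\hat t,\hat q})$. By part (iii) of Lemma (Parity identities), $\sum_t a_{tj}u^{(e)}_t$ is the $e$-half of the expansion of $A$ with column $q$ replaced by column $j$. For $j=q$ this is $\Delta_e(A)$. For $j\neq q$ that matrix has two equal columns, so by part (ii) the value $c_j$ does not depend on $e$. Hence $A^Tu^{(0)}$ and $A^Tu^{(1)}$ agree in every coordinate except the $q$-th, where they equal $\Delta_0(A)$ and $\Delta_1(A)$. These vectors play the role of the $q$-th row of the adjugate. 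The ``symmetrizer'' takes the parity-swapped combinations
\[
U=u^{(0)}x_q+u^{(1)}y_q,\qquad V=u^{(1)}x_q+u^{(0)}y_q.
\]
Their images under $A^T$ agree off coordinate $q$. The $q$-th coordinates differ only in which of $x_q,y_q$ is paired with $\Delta_0(A)$ and which with $\Delta_1(A)$.

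\emph{Step 2: using $Ax=Ay$.} To make the $q$-th coordinates agree, we feed in the hypothesis. Pairing with $u^{(e)}$ gives
\[
(u^{(e)})^TAx=(u^{(e)})^TAy,\qquad\text{that is,}\qquad \Delta_e(A)x_q+\textstyle\sum_{j\neq q}c_jx_j=\Delta_e(A)y_q+\sum_{j\neq q}c_jy_j .
\]
Adding these relations crosswise for the two parities, and adding the common terms $\sum_{j\neq q}c_j(x_j+y_j)$ to both $U$ and $V$ through extra summands, should yield $A^TU'=A^TV'$ for slightly enlarged vectors $U',V'$. Injectivity of $A^T$ then gives $U'=V'$.

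\emph{Step 3: extracting one summand.} Reading $U'=V'$ in coordinate $t$ produces an equality between parity minors of size $n-1$ times $x_q$ and times $y_q$, still carrying extra common terms. We then descend. We repeat the construction with minors of $A_{\hat t,\hat q}$ built over repeated-index tuples, which is where repeated columns and part (ii) are needed. Each step lowers the minor size by one and removes one row and column from the weights. At size $1$ the surviving ``minor'' attached to row $p$ is the single entry $a_{pq}$, which is the desired equality.

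The main obstacle is Step 2 and its repetition in Step 3. Every time injectivity is applied, the two sides must be literally equal sums, so the leftover common terms $\sum_{j\neq q}c_jx_j$ can never simply be discarded. My first attempt will be to weight by $(Ax)_i$ rather than by individual coordinates. I will put $x$-data into the entries of $U$ only through expressions of the form $\sum_j a_{ij}x_j=\sum_j a_{ij}y_j$, which can be swapped freely. Then $U$ and $V$ differ only by a parity swap, and that swap is neutralised by the two-equal-columns identity. If this bookkeeping closes up at every size, the descent goes through uniformly for all $n\geq2$.
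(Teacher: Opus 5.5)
\textbf{There is a genuine gap, and it sits exactly at the step you flag as the main obstacle.}

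Your vectors $U=u^{(0)}x_q+u^{(1)}y_q$ and $V=u^{(1)}x_q+u^{(0)}y_q$ satisfy $A^TU=A^TV$ only if the full-size balanced relation $\Bal_{x_q,y_q}(A)$ holds, namely $\Delta_0(A)x_q+\Delta_1(A)y_q=\Delta_1(A)x_q+\Delta_0(A)y_q$. That is a cancellation-free Cramer rule, and it is the whole difficulty.

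\begin{itemize}
\item Crosswise addition gives this relation only with the common summand $S_x+S_y$ attached, where $S_v=\sum_{j\neq q}c_jv_j$.
\item This summand cannot be produced in coordinate $q$ by a formal identity. Every monomial of $c_j$ avoids column $q$, while every monomial of $(A^TW)_q=\sum_t a_{tq}W_t$ contains an entry of column $q$.
\item Even granting such a $W$, injectivity yields only $U+W=V+W$, so every coordinate you read carries $W_t$. Your descent has no way to read off a coordinate free of such terms. The target $a_{pq}x_q=a_{pq}y_q$ has none, and nothing can be cancelled.
\end{itemize}

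If $\Bal_{x_q,y_q}(A)$ were available cleanly, your Step 3 would just be the descent of Lemma 5, which uses exactly your vectors. The paper applies that lemma only after the present statement is proved, because clean full-size balanced relations then follow termwise from $a_{iq}x_q=a_{iq}y_q$. So your plan postpones the actual content rather than supplying it.

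Your fallback does not help either. If $U,V$ involve $x,y$ only through $Ax=Ay$, then whatever you read off from $U=V$ is a relation among functions of $Ax$. Such a relation cannot separate $x_q$ from $y_q$.

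\textbf{The missing idea} is not to isolate $x_q$ by an adjugate pairing at all, but to twist parity on column $q$. Set $\delta_j=1$ for $j=q$ and $\delta_j=0$ otherwise. The paper uses the matrix $B$ of (3.3), whose $j$th column is the $\delta_j$-shifted parity cofactor vector of the $n\times(n-1)$ matrix $A_{H,(J,j)}$.

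\begin{itemize}
\item The twist makes $(A^TB)_{ij}=\Delta_{e+\delta_j}(A_{H,(J,j,i)})$ symmetric in $i,j$. Hence $A^TBx=B^TAx=B^TAy=A^TBy$, and injectivity applies with no common terms.
\item Here $U=Bx$ depends on the individual $x_j$, while $A^TU$ depends on $x$ only through $Ax$. This is what your fallback should have aimed at.
\item One then descends by downward induction on the twisted forms $L^e_{I,J}(v)=\sum_j\Delta_{e+\delta_j}(A_{I,(J,j)})v_j$. The coordinates $z\in J\cup\{q\}$ of $A^TU$ are handled by $Ax=Ay$ together with repeated columns; the remaining coordinates are handled by the induction hypothesis.
\item The contributions of the columns $j\neq q$ are carried along and never cancelled. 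At size one with $e=1$ they vanish because $\Delta_1([a])=0$, leaving exactly $a_{pq}v_q$.
\end{itemize}
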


\begin{proof}
Fix $p,q$. Define $\delta_j\in\mathbb{Z}/2\mathbb{Z}$ by
\[
\delta_j=
\begin{cases}
1,&j=q,\\
0,&j\neq q.
\end{cases}
\]
For an ordered $m$-tuple $I$ of distinct rows and an ordered $(m-1)$-tuple $J$ of distinct columns, with $q\notin J$, define
\begin{equation}
L^e_{I,J}(v)=\sum_{j=1}^n\Delta_{e+\delta_j}\bigl(A_{I,(J,j)}\bigr)v_j,
\qquad e\in\mathbb{Z}/2\mathbb{Z}.
\end{equation}
The appended column $j$ is allowed to repeat a column already in $J$. We claim, simultaneously for both $e$, that
\begin{equation}
L^e_{I,J}(x)=L^e_{I,J}(y).
\end{equation}
whenever $1\leq m\leq n-1$. We prove this by downward induction on $m$.

\emph{Initial step:} $m=n-1$. Let $\rho$ be the row missing from $I$ and order all rows as
\[
H=(I,\rho)=(h_1,\ldots,h_n).
\]
For fixed $I,J,e$, define an $n\times n$ matrix $B$ by
\begin{equation}
B_{h_tj}=\Delta_{e+\delta_j+t+n}\bigl(A_{H\setminus h_t,(J,j)}\bigr).
\end{equation}
Paritywise Laplace expansion gives
\begin{equation}
(A^T B)_{ij}=\Delta_{e+\delta_j}\bigl(A_{H,(J,j,i)}\bigr).
\end{equation}
We claim that this matrix is symmetric. If the columns in (3.4) are all distinct, then $i$ and $j$ are the two columns complementary to $J$. Since $q\notin J$, exactly one of $i,j$ equals $q$. Interchanging the last two columns changes the parity by one, while $\delta_i=\delta_j+1$. Hence
\[
\Delta_{e+\delta_j}\bigl(A_{H,(J,j,i)}\bigr)
=
\Delta_{e+\delta_i}\bigl(A_{H,(J,i,j)}\bigr).
\]
If a column is repeated, interchanging the last two positions swaps the parity halves by Lemma 2(i), while Lemma 2(ii) makes those two halves identical. The $\delta$-shift is therefore also immaterial, and the same equality follows. Consequently
\begin{equation}
A^T B=B^T A.
\end{equation}
Since $Ax=Ay$, equation (3.5) gives
\[
A^T Bx=B^T Ax=B^T Ay=A^T By.
\]
Injectivity of $A^T$ yields $Bx=By$. In (3.3), row $\rho=h_n$ is precisely the coefficient row in (3.1), because $n+n=0$ in $\mathbb{Z}/2\mathbb{Z}$. Thus its row equality is (3.2).

\emph{Downward step.} Suppose (3.2) is known for row tuples of length $m+1$. Fix $I,J$ at length $m$, choose a row $\tau\notin I$, and put
\[
K=(I,\tau)=(k_1,\ldots,k_{m+1}).
\]
For $v\in R^n$ define a vector $U(v)$, supported on the rows in $K$, by
\begin{equation}
U_{k_t}(v)=\sum_{j=1}^n\Delta_{e+\delta_j+t+m+1}\bigl(A_{K\setminus k_t,(J,j)}\bigr)v_j.
\end{equation}
Thus $U_r(v)=0$ for every row $r\notin K$. Another paritywise Laplace expansion gives, for every column $z$,
\begin{equation}
(A^T U(v))_z=\sum_{j=1}^n\Delta_{e+\delta_j}\bigl(A_{K,(J,j,z)}\bigr)v_j.
\end{equation}
If $z\in J$, every matrix on the right has a repeated column, so the $\delta_j$ shift is immaterial. The right side of (3.7) is then
\begin{equation}
\Delta_e\bigl(A_{K,(J,Av,z)}\bigr),
\end{equation}
where the notation means that the penultimate column is the restriction of $Av$ to the rows $K$. If $z=q$, the only shifted term has $j=q$, and that term has two equal $q$-columns. Thus (3.8) again holds. In both cases the value is the same for $v=x$ and $v=y$ because $Ax=Ay$.

Finally, if $z\notin J\cup\{q\}$, interchanging the final two columns shows that the right side of (3.7) is
\[
L^{e+1}_{K,(J,z)}(v),
\]
which is equal for $x,y$ by the induction hypothesis. Hence
\[
A^T U(x)=A^T U(y).
\]
Injectivity gives $U(x)=U(y)$. In (3.6), the component corresponding to $\tau=k_{m+1}$ is exactly $L^e_{I,J}$, because $(m+1)+(m+1)=0$ in $\mathbb{Z}/2\mathbb{Z}$. This proves (3.2) and completes the downward induction.

Finally take $m=1$, $I=(p)$, let $J$ be the empty tuple, and take $e=1$. For a $1\times1$ matrix $[a]$ one has $\Delta_0([a])=a$ and $\Delta_1([a])=0$. Therefore (3.1) reduces to
\[
L^1_{(p),\varnothing}(v)=a_{pq}v_q.
\]
Equation (3.2) is consequently the desired equality $a_{pq}x_q=a_{pq}y_q$.
\end{proof}

\begin{remark}[What happens for $4\times4$ matrices]
When $n=4$, the initial matrices $B$ in (3.3) have entries that are parity halves of $3\times3$ cofactors, hence homogeneous cubic polynomials in the entries of $A$. The identity $A^T B=B^T A$ first produces the weighted $3\times3$ minor relations. Two downward steps, from $m=3$ to $m=2$ and then to $m=1$, isolate $a_{pq}x_q$.
\end{remark}

\section{Balanced minors and one-column separation}

The next lemma turns the product equalities from Lemma 3 into equality of scalars.

For $r,s\in R$ and an ordered square matrix $M$, write
\begin{equation}
\Bal_{r,s}(M)\quad\text{for the equality}\quad
\Delta_0(M)r+\Delta_1(M)s=\Delta_0(M)s+\Delta_1(M)r.
\end{equation}

\begin{lemma}[One-column separation]
Let $n\geq2$ and assume that $A^T$ is injective. Fix a column $q$ and let $r,s\in R$. If
\begin{equation}
a_{iq}r=a_{iq}s\qquad(1\leq i\leq n),
\end{equation}
then $r=s$.
\end{lemma}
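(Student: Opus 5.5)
The plan is to reach $r=s$ by pushing the relations $\Bal_{r,s}$ from large minors down to the $1\times1$ level. At each step, injectivity of $A^T$ turns a family of balanced relations of size $m+1$ into balanced relations of size $m$. The final step, at $m=0$, is literally the equation $r=s$. The hypothesis (4.2) enters only at the top level, where it makes every full-height minor balanced.

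\emph{Top level.} I would first show that $\Bal_{r,s}(A_{H,J})$ holds for $H=(1,\ldots,n)$ and every ordered $n$-tuple $J$ of columns, repeats allowed. If $J$ has a repeated column, Lemma 2(ii) gives $\Delta_0=\Delta_1$, and (4.1) is then trivial. If $J$ is distinct, it is a reordering of all columns, so it contains $q$. Using Lemma 2(i), I would move $q$ to the last position; this at most swaps the two halves, and $\Bal_{r,s}$ is invariant under that swap. Then expand paritywise in the last column by Lemma 2(iii):
\[
\Delta_e(M)r+\Delta_{e+1}(M)s=\sum_t a_{tq}\bigl(\Delta_{e+t+n}(M_{\hat t,\hat n})\,r+\Delta_{e+t+n+1}(M_{\hat t,\hat n})\,s\bigr).
\]
Replacing each $a_{tq}r$ by $a_{tq}s$ and each $a_{tq}s$ by $a_{tq}r$, as (4.2) allows, turns the right side into $\Delta_e(M)s+\Delta_{e+1}(M)r$. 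This proves the balanced relation. The same argument in fact gives $\Bal_{r,s}$ for any minor having $q$ among its columns.

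\emph{Descent.} Suppose $\Bal_{r,s}(A_{K,J'})$ holds for every ordered $(m+1)$-tuple $K$ of distinct rows and every $(m+1)$-tuple $J'$ of columns. Fix an $m$-tuple $I$ of distinct rows, a column $m$-tuple $J$, and a row $\tau\notin I$, and set $K=(I,\tau)=(k_1,\ldots,k_{m+1})$. Define a vector $V(r,s)$ supported on $K$, in the style of (3.6), by
\[
V_{k_t}(r,s)=\Delta_{t+m+1}\bigl(A_{K\setminus k_t,J}\bigr)r+\Delta_{t+m}\bigl(A_{K\setminus k_t,J}\bigr)s.
\]
Paritywise Laplace expansion in the last column (Lemma 2(iii)) gives
\[
(A^TV(r,s))_z=\Delta_0\bigl(A_{K,(J,z)}\bigr)r+\Delta_1\bigl(A_{K,(J,z)}\bigr)s.
\]
The formula for $A^TV(s,r)$ is the same with $r$ and $s$ swapped. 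The inductive hypothesis applied to the columns $(J,z)$ therefore yields $A^TV(r,s)=A^TV(s,r)$. Injectivity of $A^T$ gives $V(r,s)=V(s,r)$, and the component at $\tau=k_{m+1}$ is exactly $\Bal_{r,s}(A_{I,J})$. Running this from $m=n-1$ down to $m=1$ gives $\Bal_{r,s}$ for all $1\times1$ minors, i.e.\ $a_{ij}r=a_{ij}s$ for all $i,j$. That is the hypothesis extended from column $q$ to every column.

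\emph{Last step and main obstacle.} One more descent, to $m=0$, is needed. Here the formula above would call for the empty minor with $\Delta_0=1$, and since $R$ has no identity this must be bypassed by hand. Take $V=r\,e_k$, meaning the vector with $r$ in row $k$ and $0$ elsewhere, and compare it with $s\,e_k$. Then $(A^T(r e_k))_z=a_{kz}r=a_{kz}s=(A^T(s e_k))_z$ by the previous stage, so injectivity gives $r e_k=s e_k$, i.e.\ $r=s$.

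The point needing most care is the parity bookkeeping in the Laplace step: the shifts $t+m+1$ must be chosen so that the coefficient at $k_{m+1}$ is unshifted, as in (3.6), using $(m+1)+(m+1)=0$. I would also check that repeated columns in $(J,z)$ are harmless; this holds because the inductive hypothesis was stated for arbitrary column tuples, and the top level covers repeats via Lemma 2(ii). If the direct parity choice misaligns, I would first try absorbing the discrepancy with Lemma 2(i), relying on $\Bal_{r,s}$ being invariant under swapping the two halves.
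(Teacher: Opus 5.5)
Your proposal is correct and is essentially the paper's proof: it uses the same descent of $\Bal_{r,s}$ through vectors supported on $K=(I,\tau)$, whose $\tau$-component is $\Bal_{r,s}(A_{I,J})$, and the same final comparison of $r$ and $s$ placed in a single coordinate. The differences are cosmetic: the paper gets the top level termwise, since every monomial contains exactly one entry of column $q$, and it handles repeated columns inside the descent rather than in the inductive hypothesis; you should also state the top level for every ordering of the rows, not only $H=(1,\ldots,n)$, which your Laplace argument already covers.
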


\begin{proof}
Every monomial in either parity half of a full $n\times n$ expansion contains exactly one entry from column $q$. Hence (4.2), multiplied by the other factors in the monomial, shows termwise that every full-size ordered minor satisfies (4.1).

We show that balanced relations descend from size $m$ to size $m-1$, for $2\leq m\leq n$. Assume that (4.1) holds for every ordered size-$m$ minor. Let $I,J$ be ordered tuples of $m-1$ distinct rows and columns; we prove the relation for $A_{I,J}$. Choose a row $\sigma\notin I$ and write
\[
K=(I,\sigma)=(k_1,\ldots,k_m).
\]
For $t=1,\ldots,m$ and $e\in\mathbb{Z}/2\mathbb{Z}$, put
\begin{equation}
C_{t,e}=\Delta_{e+t+m}\bigl(A_{K\setminus k_t,J}\bigr).
\end{equation}
Define vectors $u,v\in R^n$, supported on the rows in $K$, by
\begin{equation}
u_{k_t}=C_{t,0}r+C_{t,1}s,
\qquad
v_{k_t}=C_{t,0}s+C_{t,1}r.
\end{equation}
For a column $\ell\notin J$, paritywise Laplace expansion shows that the $\ell$-coordinate equality in $A^T u=A^T v$ is precisely
\[
\Bal_{r,s}\bigl(A_{K,(J,\ell)}\bigr),
\]
which holds by the size-$m$ hypothesis. If $\ell\in J$, the corresponding matrix has two equal columns, so $\Delta_0=\Delta_1$ and the two sides are the same two summands in opposite order. Therefore $A^T u=A^T v$. Injectivity gives $u=v$. Taking the component belonging to $\sigma=k_m$, equation (4.3) becomes
\[
C_{m,0}=\Delta_0(A_{I,J}),
\qquad
C_{m,1}=\Delta_1(A_{I,J}),
\]
so that component equality is $\Bal_{r,s}(A_{I,J})$.

Descending to size one gives
\begin{equation}
a_{ij}r=a_{ij}s\qquad(1\leq i,j\leq n).
\end{equation}
Fix a row $i$. Let $u'$ and $v'$ have respectively $r$ and $s$ in component $i$ and zero in every other component. Equation (4.5) says $A^T u'=A^T v'$. Injectivity yields $u'=v'$, and hence $r=s$.
\end{proof}

\section{Proof of the main theorem and consequences}

\begin{proof}[Proof of Theorem 1]
The case $n=1$ is immediate. Suppose $n\geq2$ and that $A^T$ is injective. If $Ax=Ay$, Lemma 3 gives, for every fixed column $q$,
\[
a_{iq}x_q=a_{iq}y_q\qquad(1\leq i\leq n).
\]
Lemma 5, applied with $r=x_q$ and $s=y_q$, gives $x_q=y_q$. This holds for every $q$, so $x=y$. Thus $A$ is injective.

Thus we have proved, for every $A$,
\[
A^T\text{ injective}\qquad\Longrightarrow\qquad A\text{ injective}.
\]
Applying this to $A^T$ gives the converse.
\end{proof}

\begin{remark}[Nature of the identities]
Every unconditional algebraic identity used above is an equality between two sums of monomials in the free commutative semiring. The other equalities follow only by substitution from the stated hypotheses and by injectivity of $A^T$. No common summand is ever cancelled. In particular, the proof remains valid for noncancellative and nonunital commutative semirings.
\end{remark}

\subsection{Stable finiteness}

The main theorem immediately recovers the one-sided inverse theorem of Reutenauer and Straubing.

\begin{corollary}[Reutenauer--Straubing]
Every unital commutative semiring is stably finite. Equivalently, if $A,B\in M_n(R)$ and $AB=I_n$, then $BA=I_n$.
\end{corollary}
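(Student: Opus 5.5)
We derive the corollary from Theorem 1 by showing that $B$ and $B^T$ are both injective, and then cancelling.

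\emph{Step 1.} Suppose $AB=I_n$. If $Bx=By$, then
\[
x=I_nx=ABx=ABy=y,
\]
so $B:R^n\to R^n$ is injective. Here we use that $R$ is unital, so $I_n$ acts as the identity on $R^n$.

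\emph{Step 2.} By Theorem 1, $B^T$ is also injective. By the semigroup reformulation after Theorem 1, $B$ is therefore right cancellative in $M_n(R)$: if $CB=DB$, then $B^TC^T=B^TD^T$ column by column, so $C^T=D^T$ and $C=D$.

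\emph{Step 3.} Associativity of matrix multiplication over a semiring gives
\[
(BA)B=B(AB)=BI_n=B=I_nB.
\]
Right cancellation of $B$ yields $BA=I_n$.

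\emph{Step 4.} For the equivalence with stable finiteness, recall that stable finiteness means each $M_n(R)$ is directly finite, i.e.\ $AB=I_n\Rightarrow BA=I_n$ for every $n$. That is exactly what Steps 1--3 prove, for arbitrary $n\ge1$.

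There is no real obstacle. The only points needing care are that $I_n$ exists because $R$ is unital, and that right cancellativity corresponds to injectivity of the transpose, which Theorem 1 supplies.
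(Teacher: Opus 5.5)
Your proof is correct and is essentially the paper's argument in mirror form. The paper gets $A^T$ injective from $B^TA^T=I_n$, uses Theorem 1 to make $A$ injective, and cancels $A$ on the left in $A(BA)=AI_n$; you instead get $B$ injective directly from $AB=I_n$, use Theorem 1 to make $B$ right cancellative, and cancel $B$ on the right in $(BA)B=I_nB$, so both routes invoke Theorem 1 exactly once and use unitality only to have $I_n$ available.
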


\begin{proof}
Transposing $AB=I_n$ gives $B^T A^T=I_n$, so $A^T$ is injective. Theorem 1 makes $A$ injective. Since
\[
A(BA)=(AB)A=A=AI_n,
\]
injectivity applied columnwise gives $BA=I_n$.
\end{proof}

\subsection{Surjectivity}

We next prove the surjective analogue, including for semirings not initially assumed to have an identity. For $r\in R$, let $D_n(r)$ denote the matrix with $r$ on its diagonal and $0$ elsewhere.

\begin{lemma}[Scaled reversal]
Let $n\geq2$ and let $A,B\in M_n(R)$. Suppose
\[
AB=\diag(t_1,\ldots,t_n).
\]
Set
\[
T=\prod_{k=1}^n t_k,
\qquad
T_s=\prod_{\substack{k=1\\k\neq s}}^n t_k,
\qquad
Y_{is}=b_{is}T_s.
\]
Then $YA=D_n(T)$.
\end{lemma}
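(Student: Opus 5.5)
The plan is to reduce the statement to the main theorem applied over a quotient semiring, and then recover the exact identity. Write $D=\diag(t_1,\ldots,t_n)$ and $D'=\diag(T_1,\ldots,T_n)$, so that $Y=BD'$ and $DD'=D_n(T)$ as matrices. No subtraction is needed for the basic computation: associativity in $M_n(R)$ gives
\[
A(YA)=(AB)D'A=DD'A=D_n(T)A=A\,D_n(T).
\]
So columnwise, $A$ sends the $j$-th column of $YA$ and the $j$-th column of $D_n(T)$ to the same vector. If $A$ were injective on $R^n$ we would be finished. We will therefore aim to prove injectivity of $A$, at least after a harmless modification of $R$.

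By Theorem 1 it suffices to prove injectivity of $A^T$, and here only a weak form is available. From $B^TA^T=D$ we get: if $A^Tx=A^Ty$, then $t_kx_k=t_ky_k$ for every $k$. Multiplying by $T_k$ gives $Tx_k=Ty_k$. This suggests working with the relation $r\sim s$ iff $T^Nr=T^Ns$ for some $N\ge0$. This relation is an equivalence (take the larger exponent for transitivity), and it is compatible with $+$ and $\cdot$. Hence $R'=R/{\sim}$ is again a commutative semiring, and it is still possibly nonunital and noncancellative.

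Over $R'$ the matrix $A^T$ is injective. Indeed, suppose $A^Tx\sim A^Ty$ componentwise, say with exponent $N$. Applying $B^T$ gives $t_kx_k\sim t_ky_k$, that is $T^Nt_kx_k=T^Nt_ky_k$. Multiplying by $T_k$ gives $T^{N+1}x_k=T^{N+1}y_k$, so $x\sim y$. Theorem 1 over $R'$ then makes $A$ injective over $R'$, and the displayed identity yields
\[
T^N(YA)_{ij}=T^N\delta_{ij}T
\]
for some $N$. The hypothesis $n\ge2$ is not needed for this part, but it matters next.

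The main obstacle is removing the factor $T^N$. Here I would use the special shape of $Y$: every entry $(YA)_{ij}=\sum_s b_{is}T_sa_{sj}$ already carries a product of $n-1$ of the $t$'s. I would try first to absorb the extra powers of $T$ by rewriting $T^N$ through $AB=D$. That is, repeatedly replace a factor $t_k$ by $\sum_\ell a_{k\ell}b_{\ell k}$, and use $\sum_\ell a_{s\ell}b_{\ell k}=0$ for $s\ne k$, so that $T$ acts on $YA$ through the factors $T_s$ inside it.

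If that fails, I would abandon the quotient and redo Lemmas 3 and 5 directly over $R$. Everywhere they invoke injectivity of $A^T$, I would use instead the weak implication
\[
A^Tx=A^Ty\ \Longrightarrow\ Tx=Ty,
\]
and track the resulting powers of $T$ through the downward inductions. The factors $T_s$ built into $Y$ are meant to pay exactly for these powers. This step, controlling the $T$-powers so that they cancel against the cofactor-like factors $T_s$ rather than accumulate, is where I expect the real work to lie.
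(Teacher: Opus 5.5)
Your first stage is correct. You have $Y=BD'$ with $D'=\diag(T_1,\ldots,T_n)$, and $A(YA)=D_n(T)A=AD_n(T)$. The relation ``$T^Nr=T^Ns$ for some $N$'' is a congruence, and $A^T$ does become injective over $R'=R/{\sim}$. Theorem 1, which is proved independently of this lemma, then gives $T^N(YA)_{ij}=T^N(D_n(T))_{ij}$ for some $N$.

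\textbf{The gap.} The lemma asserts the exact identity, and the step that removes $T^N$ is never carried out; both fallbacks are only intentions. This is a genuine gap, and the quotient stage cannot be rescued later. Take any instance with some $t_k=0$; the lemma is applied in Theorem 9 to arbitrary $t_1,\ldots,t_n$. Then $T=0$, every element of $R$ is $\sim 0$, and $R'$ is the zero semiring, so your conclusion is vacuous. Your weak implication $A^Tx=A^Ty\Rightarrow Tx=Ty$ is likewise empty. Yet the lemma still says something: since $T_s=0$ for $s\neq k$, it asserts $b_{ik}T_ka_{kj}=0$ for all $i,j$. So all the content sits in the missing step. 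No injectivity or cancellation argument can supply it; it has to come from an identity.

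\textbf{The missing idea} is the parity-split adjugate identity. In $\mathbb{Z}[(a_{ij}),(b_{ij})]$ one has $B\adj(AB)A=\det(AB)I_n$. Move every negative monomial to the other side. This gives an equality of $\mathbb{N}$-polynomials, all of positive degree, in which each cofactor of $C=AB$ enters only through its parity halves $\Delta_{r+s}(C^{(r|s)})$ and $\Delta_{r+s+1}(C^{(r|s)})$; this is (5.1). Such an identity can be evaluated in any commutative semiring, with or without identity.

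Now substitute $C=\diag(t_1,\ldots,t_n)$:
\begin{itemize}
\item for $r\neq s$, the matrix $C^{(r|s)}$ has a zero row;
\item for $r=s$, its even half is $T_s$ and its odd half is $0$;
\item $\Delta_0(C)=T$ and $\Delta_1(C)=0$.
\end{itemize}
The identity collapses to $\sum_s b_{is}T_sa_{sj}=T$ for $i=j$ and $=0$ for $i\neq j$, which is $YA=D_n(T)$.

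Your first fallback points in this direction: replacing $t_k$ by $\sum_\ell a_{k\ell}b_{\ell k}$ and using the vanishing off-diagonal entries of $AB$. Without subtraction, though, that bookkeeping has to be organized as exactly such a formal identity. Once you have it, neither the quotient nor Theorem 1 is needed.
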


\begin{proof}
Put $C=AB$. If $C^{(r|s)}$ denotes the ordered matrix obtained from $C$ by deleting row $r$ and column $s$, separating the positive and negative monomials in the classical adjugate polynomial identity gives the following identity
\begin{equation}
\begin{aligned}
&\sum_{r,s=1}^n b_{is}a_{rj}\Delta_{r+s}\bigl(C^{(r|s)}\bigr)
+
\begin{cases}
\Delta_1(C),&i=j,\\
0,&i\neq j
\end{cases}
\\
&\qquad=
\sum_{r,s=1}^n b_{is}a_{rj}\Delta_{r+s+1}\bigl(C^{(r|s)}\bigr)
+
\begin{cases}
\Delta_0(C),&i=j,\\
0,&i\neq j.
\end{cases}
\end{aligned}
\end{equation}
Here the parity subscripts are taken in $\mathbb{Z}/2\mathbb{Z}$, and the two cases on each side are external case distinctions rather than multiplication by a scalar Kronecker delta. For completeness, form the identity first in the polynomial ring $\mathbb{Z}[(a_{ij}),(b_{ij})]$. There
\[
B\adj(AB)A=\det(AB)I_n.
\]
Expanding and moving every negative monomial to the opposite side gives exactly (5.1). All of its monomials have positive degree, so the resulting equality of $\mathbb{N}$-polynomials may be evaluated in a nonunital semiring. This is the parity-split adjugate mechanism used in \cite{ReutenauerStraubing1984,Shitov2025}.

Now $C=\diag(t_1,\ldots,t_n)$. If $r\neq s$, the complementary matrix $C^{(r|s)}$ has a zero row or a zero column, so both parity halves vanish. If $r=s$, its even half is $T_s$ and its odd half is $0$. Likewise, $\Delta_0(C)=T$ and $\Delta_1(C)=0$. Thus (5.1) reduces to
\[
\sum_{s=1}^n b_{is}T_s a_{sj}=
\begin{cases}
T,&i=j,\\
0,&i\neq j,
\end{cases}
\]
which is precisely $YA=D_n(T)$. Since $n\geq2$, every product occurring here has positive length; no empty product or multiplicative identity has been used.
\end{proof}

\begin{theorem}[Transpose symmetry of surjectivity]
Let $R$ be a commutative semiring and $A\in M_n(R)$, where $n\geq1$. Then $A:R^n\to R^n$ is surjective if and only if $A^T:R^n\to R^n$ is surjective.
\end{theorem}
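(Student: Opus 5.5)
By the symmetry $A\leftrightarrow A^T$ it suffices to prove one direction: if $A$ is surjective, then $A^T$ is surjective. We also want to show, along the way, that $R$ has a multiplicative identity. The case $n=1$ is direct. If $a\cdot R=R$, pick $u$ with $au=a$. Then for any $r=as$ we get $ur=uas=as=r$, so $u$ is an identity. Moreover $a$ is a unit, since $av=u$ for some $v$, and so $A^T=A$ is surjective.

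So let $n\geq2$ and let $A$ be surjective. For each $j$ choose a column $b_j$ with $Ab_j$ equal to the $j$-th vector having some entry in position $j$ and zeros elsewhere. The natural first attempt is to hit an identity, but we have none yet. Instead I first produce one.

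\textbf{Step 1: a local identity.} Pick any $r\in R$. Surjectivity gives $w$ with $Aw=(r,0,\dots,0)^T$. It also gives $w'$ with $Aw'=(s,0,\ldots,0)^T$ for arbitrary $s$, but that does not directly give an identity. Better: apply surjectivity to the vector $a$-column. I would rather proceed as follows. Let $t\in R$ be arbitrary and choose columns $b_j$ with $Ab_j=t e_j$, meaning $t$ in position $j$ and $0$ elsewhere. Then $AB=D_n(t)$.

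\textbf{Step 2: apply Lemma 8.} Lemma 8 gives $YA=D_n(T)$ with $T=t^n$. Now every vector is $Av$ for some $v$, so for every $x$ we get
\[
t^n x = YAv\cdot(\text{appropriate})\ldots
\]
More carefully, writing $x=Av$ gives $D_n(T)v=YAv=Yx$. The key obstacle is to get an identity element $u$ with $uR=R$ such that $u=T$ for a suitable $t$.

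\textbf{Step 3: produce the identity.} I would try a different choice of target. Surjectivity lets us solve $AB=X$ for any $X$, columnwise. Since $A$ is surjective, so is $A^k$. Take an arbitrary $x$, write $x=Av$, and then $A(Tv)=Tx=AYx$. This shows $A$ agrees on $Tv$ and $Yx$, but $A$ need not be injective, so this alone does not close the argument.

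Instead I would show $R=TR$ and look for $e$ with $eT=T$. Choose $B$ with $AB=D_n(T)$, apply the lemma again, and iterate. To get an idempotent-like identity I would pick $c$ with $Ac=Ta_{\ast j}$, which is possible by surjectivity. The main obstacle is exactly this: manufacturing the unit $u$ with $ux=x$ for all $x$. Once $u$ is available, take $t=u$. Then $T=u$, so $YA=I_n$, and transposing gives $A^TY^T=I_n$. Hence $A^T$ is surjective, since $A^T(Y^Tz)=z$.

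For the identity itself, I would first try to extract it from $YA=D_n(T)$ combined with surjectivity. Every $x$ has the form $Av$, so $A(D_n(T)v)=AYAv=AYx$. It then suffices to find $t$ with $AY=I_n$ up to scaling, which I would attempt via the Reutenauer--Straubing type argument applied after adjoining an identity formally to $R$.
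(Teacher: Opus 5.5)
For $n\geq 2$ your proof has a genuine gap. Everything is routed through a multiplicative identity $u$, and you never produce one; you flag this yourself as ``the main obstacle''. The fallback you propose does not work. After formally adjoining an identity, $R$ is an ideal of the unitization $R^1$, so $Ax\in R^n$ for every $x\in(R^1)^n$. Thus $A$ is not surjective over $R^1$, the equation $AB=I_n$ has no solution there, and the Reutenauer--Straubing argument cannot start. You also cannot borrow the identity from Corollary 10, because that corollary is deduced from this very theorem. Your observations in Step 3 (for instance $AYx=Tx$) are correct but do not lead anywhere. With only $AB=D_n(t)$ you get $YA=D_n(t^n)$, which controls sums of $n$-th powers, and these need not exhaust $R$; for example, take $R=\mathbb{R}$ with $n$ even.

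Two ingredients are missing.
\begin{itemize}
\item[(a)] Surjectivity alone forces every element to be a sum of $n$-fold products. Lifting $(r,0,\dots,0)^T$ gives $r=\sum_j a_{1j}w_j$, so $R=P_2(R)$, and distributivity then gives $R=P_m(R)$ for all $m\geq 2$.
\item[(b)] Apply Lemma 8 with an arbitrary diagonal $AB=\diag(t_1,\dots,t_n)$, not just $D_n(t)$, to get $YA=D_n(t_1\cdots t_n)$. Summing over a decomposition $r=\sum_\alpha\prod_k t_{\alpha k}$ then gives $Y_rA=D_n(r)$ for every $r\in R$.
\end{itemize}

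The paper finishes from here without ever constructing an identity. Write $z_i=\sum_\beta r_{i\beta}s_{i\beta}$ using $R=P_2(R)$, and let $v_{i\beta}$ carry $s_{i\beta}$ in coordinate $i$. Then $\sum_{i,\beta}Y_{r_{i\beta}}^Tv_{i\beta}$ is a preimage of $z$ under $A^T$.

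The same two facts would also complete your own plan. Choose $C$ columnwise with $AC=A$. Then $D_n(T)C=YAC=YA=D_n(T)$ gives $c_{11}T=T$ for every $T=t_1\cdots t_n$. Hence $c_{11}r=r$ for all $r\in P_n(R)=R$, so $c_{11}$ is an identity. Your final step ($t=u$, $YA=I_n$, $A^TY^T=I_n$) then goes through.
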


\begin{proof}
The case $n=1$ is immediate because $A=A^T$. Assume $n\geq2$ and that $A$ is surjective. For $m\geq1$, let $P_m(R)$ be the additive submonoid of $R$ consisting of finite sums of products of exactly $m$ elements of $R$. Lifting a vector with an arbitrary $r\in R$ in one coordinate and $0$ elsewhere shows that $R=P_2(R)$. If $R=P_m(R)$ and $r=\sum_\gamma u_\gamma v_\gamma$ is a two-factor decomposition, expanding each $u_\gamma$ as a sum of $m$-fold products shows by distributivity that $r\in P_{m+1}(R)$. Hence $R=P_m(R)$ for every $m\geq2$.

Given arbitrary $t_1,\ldots,t_n\in R$, surjectivity allows us to choose the columns of a matrix $B$ so that
\[
AB=\diag(t_1,\ldots,t_n).
\]
Lemma 8 supplies a matrix $Y$ with $YA=D_n(t_1\cdots t_n)$. Now let $r\in R=P_n(R)$ and write
\[
r=\sum_\alpha\prod_{k=1}^n t_{\alpha k}.
\]
Adding the matrices $Y$ associated with these products gives a matrix $Y_r$ satisfying
\begin{equation}
Y_rA=D_n(r),
\qquad\text{and hence}\qquad
A^T Y_r^T=D_n(r).
\end{equation}

Let $z=(z_1,\ldots,z_n)^T\in R^n$. Since $R=P_2(R)$, write
\[
z_i=\sum_\beta r_{i\beta}s_{i\beta}\qquad(1\leq i\leq n).
\]
For each $i,\beta$, let $v_{i\beta}$ have $s_{i\beta}$ in coordinate $i$ and $0$ in every other coordinate. By (5.2),
\[
A^T\bigl(Y_{r_{i\beta}}^T v_{i\beta}\bigr)=D_n(r_{i\beta})v_{i\beta},
\]
whose sole possibly nonzero coordinate is $r_{i\beta}s_{i\beta}$ in position $i$. Summing these preimages gives a preimage of $z$ under $A^T$. Hence $A^T$ is surjective. Applying the same implication to $A^T$ proves the converse.
\end{proof}

\begin{corollary}
If a commutative semiring admits a surjective square matrix of positive size, then it has a multiplicative identity. Every such surjective matrix is invertible.
\end{corollary}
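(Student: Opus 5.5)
The plan has two steps: first build an identity element from surjectivity, then upgrade surjectivity to invertibility using the machinery already developed.

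\emph{Case $n=1$.} Here $A=[a]$ and $r\mapsto ar$ is onto, so $ae=a$ for some $e$. For any $s\in R$ write $s=ar$; then $es=ear=ar=s$, so $e$ is an identity.

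\emph{Case $n\geq2$: producing the identity.} Let $A$ be surjective of size $n\geq2$. The proof of Theorem 9 already gives $R=P_m(R)$ for all $m\geq2$. It also shows that for every $r\in R$ there is $Y_r$ with $Y_rA=D_n(r)$, and symmetrically, by surjectivity, some $B_r$ with $AB_r=D_n(r)$. Choose $B$ with $AB=D_n(e')$ for suitable scalars; the cleanest route is to pick $t$ so that the diagonal matrix has a two-sided interaction with $A$. Concretely, I would proceed as follows.
\begin{enumerate}
\item Use surjectivity of $A$ to choose, for each column, a vector $b_j$ with $Ab_j=a_j$, the $j$-th column of $A$. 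With $B$ the matrix of these vectors, this gives $AB=A$.
\item Then $A(Bx)=Ax$ for all $x$. By Theorem 9, $A^T$ is also surjective, which gives $A^T$-preimages of the rows. Combining these with Lemma 8 should yield a matrix $Y$ and a scalar $T$ with $YA=D_n(T)$.
\item Show that $D_n(T)$ acts as the identity on $R^n$. Take $z\in R^n$ and write $z=Aw$ by surjectivity. Then $YAw=Tw$, but that does not immediately give $Tz=z$, so a better choice is needed.
\end{enumerate}

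The better choice is to pick $B$ with $AB=D_n(t)$ for arbitrary $t$, and to use the fact that $R^n=A(R^n)$. Then any $z$ equals $Aw$, and $D_n(T)z=AD_n(T)w$ by commutativity of scalars. The key identity is $AY A=AD_n(T)=D_n(T)A$. Since $A$ is surjective, $D_n(T)$ agrees with $AY$ on the image, which is all of $R^n$; hence $D_n(T)=AY$. So every scalar matrix $D_n(T)$ of this form factors as $AY$, and similarly as $Y'A$.

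To get the unit itself, I take $e$ with $e$-scaled behaviour by surjectivity. Choose $w$ with $Aw=z$, where $z$ is a vector with first coordinate $a_{11}$, and so on. I expect this to be the main obstacle: extracting an honest identity, rather than merely the scalars $T$. My first attempt would be to consider $E=AY_0$ and $F=Y_0A$, and to show that $E$ is an idempotent acting as the identity on $A(R^n)=R^n$. That forces $E=I$ in the sense that $Ez=z$ for all $z$. The entries of $E$ applied to vectors $re_i$ then give $E_{ii}r=r$ for all $r$, so $e=E_{11}$ is an identity.

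\emph{Invertibility.} Once $R$ is unital, surjectivity gives $B$ with $AB=I_n$ by lifting the standard basis vectors. Corollary 7 (Reutenauer--Straubing) then gives $BA=I_n$, so $A$ is invertible.
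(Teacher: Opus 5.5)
There is a genuine gap at exactly the point you flagged as the main obstacle: for $n\geq2$ you never actually produce an element that acts as the identity. The matrix $Y_0$ in $E=AY_0$ is never specified, and no argument is given that $E$ is idempotent or fixes every vector. Everything you do derive is scalar. From $Y_rA=D_n(r)$ you get $AY_r(Aw)=A\,D_n(r)\,w=r(Aw)$, so $AY_r$ acts on $R^n=A(R^n)$ as multiplication by $r$. (This gives $AY_r=D_n(r)$ only as maps, not entrywise, since without a unit you cannot test on standard basis vectors.) Such a matrix acts as the identity only if $r$ is already a unit, so this line of argument is circular. Your step 1 is on the wrong side: $AB=A$ gives only $A(Bx)=Ax$, and without injectivity of $A$ this says nothing about $Bx$ versus $x$.

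The missing idea is to find a left identity for $A$, that is, a matrix $E$ with $EA=A$, and then let surjectivity of $A$ do the work. Indeed, $E(Aw)=Aw$ for all $w$ means $Ez=z$ for every $z\in R^n$. Testing on the vector with $r$ in coordinate $i$ and $0$ elsewhere gives $e_{ii}r=r$.

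The paper obtains $E$ from Theorem 9. Since $A^T$ is surjective, choose $C$ whose columns are $A^T$-preimages of the columns of $A^T$, which are the rows of $A$. These are precisely the preimages you mention in step 2 and then abandon. Then $A^TC=A^T$, and you set $E=C^T$.

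Alternatively, your own two ingredients already suffice if you combine them. With $AB=A$ from step 1 and $Y_rA=D_n(r)$ from the proof of Theorem 9, associativity gives $D_n(r)=Y_rA=Y_r(AB)=(Y_rA)B=D_n(r)B$. The $(1,1)$ entry of this equation reads $r=rb_{11}$ for all $r\in R$, so $b_{11}$ is an identity.

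Your $n=1$ case and your invertibility step are correct and match the paper. In the latter you lift the standard basis to get $AB=I_n$ and then apply Corollary 7.
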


\begin{proof}
Suppose $A\in M_n(R)$ is surjective. By Theorem 9, $A^T$ is surjective. Choose a matrix $C$ columnwise so that $A^T C=A^T$, and put $E=C^T$. Then $EA=A$. Since $A$ is surjective, $E$ acts as the identity on every vector of $R^n$. Applying this to a vector with an arbitrary $r\in R$ in coordinate $i$ and $0$ elsewhere gives $e_{ii}r=r$. Commutativity makes $e_{ii}$ a two-sided multiplicative identity for $R$.

Now that $R$ is known to be unital, choose $B$ columnwise with $AB=I_n$. Corollary 7 gives $BA=I_n$, so $A$ is invertible.
\end{proof}
\section*{Acknowledgements}
The author Sixuan Gu is simultaneously supervised by Professor Jiu-Kang Yu of Hetao Institute of Mathematics and Interdisciplinary Sciences and Professor Caihua Luo of the Chinese University of Hong Kong (Shenzhen) during this research. Professor Yu gave valuable advice on generalizing the theorem from dimension 3 to arbitrary dimensions, and to surjectivity of the linear maps. 
\section*{Declaration of generative AI and AI-assisted technologies in the manuscript preparation process}

During the preparation of this work, the author used OpenAI's ChatGPT to explore proof strategies, check formal polynomial identities, and assist in drafting. The author reviewed and edited the resulting material and takes full responsibility for the content of the publication.


\begin{thebibliography}{99}

\bibitem{Kanan2016}
A. M. Kanan, Zero divisors for matrices over commutative semirings,
\emph{ScienceAsia} \textbf{42} (2016), 362--365.
doi:10.2306/scienceasia1513-1874.2016.42.362.

\bibitem{ReutenauerStraubing1984}
C. Reutenauer and H. Straubing, Inversion of matrices over a commutative semiring,
\emph{J. Algebra} \textbf{88} (1984), no. 2, 350--360.
doi:10.1016/0021-8693(84)90070-X.

\bibitem{Shitov2025}
Y. Shitov, Easy matrix inversion in semirings, preprint (2025).
doi:10.13140/RG.2.2.33126.54084.

\bibitem{PoplinHartwig2004}
P. L. Poplin and R. E. Hartwig, Determinantal identities over commutative semirings,
\emph{Linear Algebra Appl.} \textbf{387} (2004), 99--132.
doi:10.1016/j.laa.2004.02.019.

\bibitem{WildingJohnsonKambites2013}
D. Wilding, M. Johnson, and M. Kambites, Exact rings and semirings,
\emph{J. Algebra} \textbf{388} (2013), 324--337.
doi:10.1016/j.jalgebra.2013.05.005.

\bibitem{GouldJohnsonNaz2020}
V. Gould, M. Johnson, and M. Naz, Matrix semigroups over semirings,
\emph{Internat. J. Algebra Comput.} \textbf{30} (2020), no. 2, 267--337.
doi:10.1142/S0218196720500010.

\bibitem{Ghosh1996}
S. Ghosh, Matrices over semirings,
\emph{Inform. Sci.} \textbf{90} (1996), 221--230.
doi:10.1016/0020-0255(95)00283-9.

\bibitem{Xu2026}
J. Xu, Transpose symmetry of injectivity of linear maps over semirings,
MathOverflow question 511862 (2026).
\url{https://mathoverflow.net/questions/511862/}.

\bibitem{lean4}
L. de Moura and S. Ullrich, The Lean 4 theorem prover and programming language,
in \emph{Automated Deduction---CADE 28},
Lecture Notes in Computer Science, vol. 12699,
Springer, 2021, pp. 625--635.
doi:10.1007/978-3-030-79876-5\_37.

\bibitem{mathlib}
The mathlib Community, The Lean mathematical library,
in \emph{Proceedings of the 9th ACM SIGPLAN International Conference
on Certified Programs and Proofs (CPP 2020)},
ACM, 2020, pp. 367--381.
doi:10.1145/3372885.3373824.

\end{thebibliography}
\end{document}